\newtheorem{theorem}{Theorem}[section]
\newtheorem{lemma}{Lemma}[section]
\newtheorem{example}{Example}
\theoremstyle{remark}
\newtheorem{definition}{Definition}[section]
\newtheorem{remark}{Remark}[section]
\theoremstyle{remark}
\begin{document}

\begin{center}
\Large{ Games of Nim with Dynamic Restrictions }\\
\vspace{0.5cm}
\large{Keita Mizugaki,Shoei Takahashi,Hikaru Manabe}\\
\large{Aoi Murakami,Ryohei Miyadera}
\end{center}

\begin{abstract}
 The authors present formulas for the previous player's winning positions of two variants of restricted Nim.
 In both of these two games, there is one pile of stones, and 
 in the first variant, we investigate the case that in $k$-th turn, you can remove $f(k)$ stones at most, where  $f$ is a function whose values are natural numbers. In the second variant, there are two kinds of stones. The Type 1 group consists of stones with the weight of one, and the Type 2 group consists of stones with the weight of two. When the total weight of stones is $a$, you can remove stones whose total weight is equal to or less than $\lfloor \frac{a}{2} \rfloor$. 

\end{abstract}

\section{Introduction and a Restricted Nim}
 The classic game of Nim is played with stone piles. A player can remove any number of stones from any one pile during their turn; the player who takes the last stone is considered the winner. 
 
 There are many variants of the classical game of Nim. In this study, we investigate the game of Nim with one pile. When there is only one pile, we must restrict the number of stones players can remove in each turn. 

 In Maximum Nim which is one of restricted games of Nim, we place an upper bound $f(n)$ on the number of stones that players can remove in terms of the number $n$ of stones in the pile. As an example of restricted Nim, See Levine \cite{levinenim}, and Miyadera et al. \cite{thaij2023b} is a recent result.
 There is also a study on the restricted Nim with three piles. See Miyadera and Manabe 
\cite{integer2023}.

In this study, we investigate two variants of restricted Nim. 
In the first game, players can remove 
$f(k)$ stones at most in their $k$-th turn of the game, where $f$ is a function whose values are natural numbers. The authors present formulas for the previous player's winning positions of this game. This restriction is dynamic since the restriction condition will change as the game proceeds.

In the second, there are stones with a weight 1 and stones with a weight of 2. 
Players can remove stones whose total weight is equal to or less than half of the total weight in the pile. The restriction of this game depends not only on the total weight of the stones, but also on the numbers of Type 1 and Type 2 stones. We may call this restriction dynamic since it depends on many factors.
The authors present formulas for the previous player's winning positions of this game, too.

The significance of the present article is the introduction of two new games. Although the research results of these games are still in 
the beginning stage, the authors expect a bright future for the research of these games.

Let $\mathbb{Z}_{\ge 0}$ and  $\mathbb{N}$ represent the sets of non-negative integers and natural numbers, respectively.

The restricted nims that we study in the present article are impartial games without draws, there will be only
two kinds of positions.
\begin{definition}\label{NPpositions}
$(a)$ A position is referred to as a $\mathcal{P}$-\textit{position} if it is a winning position for the previous player (the player who just moved), as long as he/she plays correctly at every stage.\\
$(b)$ A position is referred to as an $\mathcal{N}$-\textit{position} if it is a winning position for the next player, as long as he/she plays correctly at every stage.
\end{definition}

\section{When Restriction Depends on the Turns}
In this section, we study a restricted game of Nim in which players can remove 
$f(k)$ stones at most in their $k$-th turn of the game, where $f$ is a function whose values are natural numbers.
\begin{definition}\label{gameofmizugakig}
Let $f$ be a monotonically increasing function of $\mathbb{N}$ into $\mathbb{N}$.
Suppose there is a pile of stones, and two players take turns removing stones from the pile.
In $k$th turn, the player is allowed to remove at least one stone and at most $f(k)$ stones. The player who removes the last stone is the winner. 
\end{definition}

\begin{definition}\label{gameofmizugaki1}
We denote by $(n,k)$ the position of the game when a player removes a stones in the $k$-th turn and the number of stones is $n$,
\end{definition}

\begin{definition}\label{moveofmizugaki}
For $u, k \in \mathbb{N}$, the set of all the positions that can be reached from position $(u,k)$ is defined as 
$\textit{move}(u,k)$.\\
$(i)$ We define 
\begin{equation}
\textit{move}(0,k)	= \{\} = \emptyset.\nonumber   
\end{equation}
$(ii)$
For $u,k \in \mathbb{N}$, 
\begin{equation}
\textit{move}(u,k)	= \{(u-t,k+1):t \in \mathbb{N} \text{ and } 1 \leq t \leq \min(u,f(k)) \}.\nonumber   
\end{equation}
\end{definition}

\begin{definition}\label{defofpkng}
For $n \in \mathbb{Z}_{\ge 0}$ and $k \in \mathbb{N}$, let 
\begin{equation}
\mathcal{P}_{k,n}^f = \{ (x,k):\sum_{t=1}^n (f(k+2t-2)+1) \leq x \leq  \sum_{t=1}^n (f(k+2t-1)+1) \}.
\end{equation}

\begin{equation}
\mathcal{P}_{k}^f = \cup \{\mathcal{P}_{k,n}^f:n \in \mathbb{Z}_{\ge 0} \}, \nonumber
\end{equation}
and 
\begin{equation}
\mathcal{P}^f= \cup \{\mathcal{P}_k^f:k \in \mathbb{N}\}.\nonumber
\end{equation}
\end{definition}

\begin{remark}
Note that 
$\mathcal{P}_{k,0}^f =\{(0,k): k \in \mathbb{Z}_{\ge 0}\}.$
\end{remark}

\begin{lemma}\label{comparenn1}
For $(u,k) \in \mathcal{P}_{k,n}^f$ and $(v,k) \in \mathcal{P}_{k,n+1}^f$, 
we have 
\begin{equation}
u < v.\label{usomallthvg}    
\end{equation}
\end{lemma}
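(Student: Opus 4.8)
The plan is to reduce the claim to a single numerical inequality between the two defining bounds and then verify it by a term-by-term comparison exploiting the monotonicity of $f$.

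First I would invoke Definition \ref{defofpkng}: from $(u,k)\in\mathcal{P}_{k,n}^f$ we obtain the upper bound $u\le\sum_{t=1}^n\bigl(f(k+2t-1)+1\bigr)$, and from $(v,k)\in\mathcal{P}_{k,n+1}^f$ we obtain the lower bound $v\ge\sum_{t=1}^{n+1}\bigl(f(k+2t-2)+1\bigr)$. Hence it suffices to prove
\[
\sum_{t=1}^n\bigl(f(k+2t-1)+1\bigr)<\sum_{t=1}^{n+1}\bigl(f(k+2t-2)+1\bigr),
\]
since then $u\le\sum_{t=1}^n\bigl(f(k+2t-1)+1\bigr)<\sum_{t=1}^{n+1}\bigl(f(k+2t-2)+1\bigr)\le v$.

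Next I would separate the constant summands ($n$ on the left, $n+1$ on the right), so the desired inequality becomes $\sum_{t=1}^n f(k+2t-1)<\sum_{t=1}^{n+1} f(k+2t-2)+1$, and then reindex the right-hand sum by peeling off the $t=1$ term and substituting $s=t-1$: $\sum_{t=1}^{n+1} f(k+2t-2)=f(k)+\sum_{s=1}^n f(k+2s)$. Since $f$ is monotonically increasing and $k+2t-1\le k+2t$, we have $f(k+2t-1)\le f(k+2t)$ for every $t$, hence $\sum_{t=1}^n f(k+2t-1)\le\sum_{t=1}^n f(k+2t)$. Because $f$ takes values in $\mathbb{N}$, the leftover term satisfies $f(k)+1\ge 2>0$, which promotes the inequality to a strict one and gives the claim. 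The edge case $n=0$, where $\mathcal{P}_{k,0}^f=\{(0,k)\}$ so that $u=0$, is handled by the same computation with empty sums read as $0$, yielding $0<f(k)+1$.

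I do not anticipate a genuine obstacle: the only delicate point is the index arithmetic in the shift $t\mapsto t-1$ that aligns $f(k+2t-2)$ with $f(k+2t)$, together with the observation that "monotonically increasing" is used only in the weak sense of "non-decreasing." The strictness is supplied for free by the surplus term $f(k)+1\ge 2$, so no hypothesis of strict monotonicity is required.
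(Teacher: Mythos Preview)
Your proposal is correct and follows essentially the same route as the paper: both arguments reduce the claim to the chain
\[
u \le \sum_{t=1}^n (f(k+2t-1)+1) < \sum_{t=1}^{n+1}(f(k+2t-2)+1) \le v,
\]
with the outer inequalities coming from Definition~\ref{defofpkng} and the strict middle inequality from monotonicity of $f$. The paper simply asserts the middle inequality, whereas you spell out the term-by-term comparison (reindexing to pair $f(k+2t-1)$ with $f(k+2t)$ and using the leftover $f(k)+1>0$ for strictness); this added detail is a welcome clarification but not a different method.
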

\begin{proof}
Since $f$ is a monotonically increasing function of $\mathbb{N}$ into $\mathbb{N}$, by Definition \ref{defofpkng}, 
\begin{equation}
u \leq \sum_{t=1}^n (f(k+2t-1)+1) < \sum_{t=1}^{n+1} (f(k+2t-2)+1) \leq v.
\end{equation}
\end{proof}

\begin{lemma}\label{fromptononp}
If we start with a position $(x,k) \in \mathcal{P}_{k}^f$,
\begin{equation}
move(x,k) \cap \mathcal{P}_{k+1}^f = \emptyset. \label{moveemplty2g}
\end{equation}
\end{lemma}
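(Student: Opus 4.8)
The plan is to unwind the definitions and reduce everything to two bookkeeping identities among the partial sums that define the sets $\mathcal{P}_{k,n}^f$. For brevity write $L_n^{(k)}=\sum_{t=1}^{n}(f(k+2t-2)+1)$ and $U_n^{(k)}=\sum_{t=1}^{n}(f(k+2t-1)+1)$, so that $(x,k)\in\mathcal{P}_{k,n}^f$ means exactly $L_n^{(k)}\le x\le U_n^{(k)}$. Two elementary facts drive the argument: \emph{(i)} the shift identity $U_n^{(k)}=L_n^{(k+1)}$, obtained by replacing $k$ with $k+1$ in the index of the sum; and \emph{(ii)} $L_{n+1}^{(k)}=(f(k)+1)+U_n^{(k+1)}$, obtained by peeling off the $t=1$ term and re-indexing. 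I would also record that, since $f$ takes values in $\mathbb{N}$, every summand in these sums is at least $2$, so $L_\bullet^{(k)}$ and $U_\bullet^{(k)}$ are strictly increasing in their subscript; in particular $L_n^{(k)}\ge L_1^{(k)}=f(k)+1$ whenever $n\ge 1$.

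Next I would fix $(x,k)\in\mathcal{P}_{k,n}^f$ and split on $n$. If $n=0$ then $x=0$ and $\textit{move}(0,k)=\emptyset$, so there is nothing to prove. If $n\ge 1$, the monotonicity remark gives $x\ge L_n^{(k)}\ge f(k)+1>f(k)$, so $\min(x,f(k))=f(k)$ and every element of $\textit{move}(x,k)$ has the form $(y,k+1)$ with $y=x-t$ for some $1\le t\le f(k)$; equivalently $x-f(k)\le y\le x-1$.

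Now suppose for contradiction that one such position lies in $\mathcal{P}_{k+1}^f$, say $(y,k+1)\in\mathcal{P}_{k+1,m}^f$, i.e. $L_m^{(k+1)}\le y\le U_m^{(k+1)}$. The upper bound $y\le x-1\le U_n^{(k)}-1=L_n^{(k+1)}-1<L_n^{(k+1)}$, using identity \emph{(i)}, together with $y\ge L_m^{(k+1)}$ and the monotonicity of $L_\bullet^{(k+1)}$, forces $m\le n-1$. The lower bound $y\ge x-f(k)\ge L_n^{(k)}-f(k)=1+U_{n-1}^{(k+1)}>U_{n-1}^{(k+1)}$, using identity \emph{(ii)}, then gives $y>U_{n-1}^{(k+1)}\ge U_m^{(k+1)}$, contradicting $y\le U_m^{(k+1)}$. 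Hence no move from $(x,k)$ reaches $\mathcal{P}_{k+1}^f$, which is precisely \eqref{moveemplty2g}.

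The only genuine content is spotting the two index-shift identities and the observation that a $\mathcal{P}$-position with a nonempty pile automatically has more than $f(k)$ stones, so the set of reachable pile sizes is the \emph{whole} window $[x-f(k),x-1]$; after that, the two one-line estimates pin $y$ strictly between consecutive $\mathcal{P}_{k+1}$-blocks (above $U_{n-1}^{(k+1)}$, below $L_n^{(k+1)}$). I expect the main obstacle to be purely organizational — keeping the four partial-sum expressions and their sub/superscripts straight — rather than conceptual; sketching the first few blocks $\mathcal{P}_{k,n}^f$ and $\mathcal{P}_{k+1,m}^f$ on a number line, with the latter sitting exactly in the gaps of the former shifted by one index, makes the picture transparent.
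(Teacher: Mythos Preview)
Your proof is correct and follows essentially the same approach as the paper: both pin any reachable pile size $y$ strictly between $U_{n-1}^{(k+1)}$ and $L_n^{(k+1)}$ (in your notation), then use the monotone ordering of the blocks $\mathcal{P}_{k+1,m}^f$ to conclude $y$ lies in none of them. Your version is in fact a bit more careful---you introduce clean shorthand, isolate the two index-shift identities explicitly, and handle the $n=0$ case separately---whereas the paper runs the same chain of inequalities directly on the raw sums and appeals to Lemma~\ref{comparenn1} for the monotonicity step.
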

\begin{proof}
Suppose that 
\begin{equation}
(x,k) \in \mathcal{P}_{k,n}^f \label{xkbelongpmkg}   
\end{equation}
for $n \in \mathbb{Z}_{\ge 0}$  and 
\begin{equation}
(u,k+1) \in move(x,k). \label{uk1belongmoveg}   
\end{equation}
Then, by definitions  \ref{gameofmizugakig} and \ref{defofpkng}, we have
\begin{align}
\sum_{t=1}^{n-1} (f(k+1+2t-1)+1) &  < \sum_{t=1}^n (f(k+2t-2)+1)-f(k) \nonumber \\
& \leq x-f(k) \nonumber \\
& \leq u \nonumber \\
& \leq x-1 \nonumber \\
& \leq \sum_{t=1}^n(f(k+2t-1)+1)-1 \nonumber \\
& < \sum_{t=1}^n(f(k+1+2t-2)+1).\label{uarea}   
\end{align}	
Since 
\begin{equation}
\mathcal{P}_{k+1,n}^f = \{ (x,k):\sum_{t=1}^n (f(k+1+2t-2)+1) \leq x \leq  \sum_{t=1}^n (f(k+1+2t-1)+1) \} \nonumber
\end{equation}
and 
\begin{equation}
\mathcal{P}_{k+1,n-1}^f = \{(x,k):\sum_{t=1}^{n-1} (f(k+1+2t-2)+1) \leq x \leq  \sum_{t=1}^{n-1} (f(k+1+2t-1)+1) \},  \nonumber
\end{equation}
by (\ref{uarea}) we have
\begin{equation}
v < u < w    \label{vuwin}
\end{equation}
for any $(v,k+1) \in \mathcal{P}_{k+1,n-1}^f$ and $(w,k+1) \in \mathcal{P}_{k+1,n}^f$.
By Lemma \ref{comparenn1} and (\ref{vuwin}), we have
$(u,k+1) \notin  \mathcal{P}_{k+1,m}^f = \emptyset$ for any $m \in \mathbb{Z}_{\ge 0}$.
\end{proof}

\begin{lemma}\label{fromntopp}
Suppose that we start with a position 
\begin{equation}
(x,k) \notin \mathcal{P}_{k}^f . \label{xknotinpg}   
\end{equation}
Then,
\begin{equation}
move(x,k) \cap \mathcal{P}_{k+1}^f \ne \emptyset. \label{movenotemg}
\end{equation}
\end{lemma}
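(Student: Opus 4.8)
The plan is to show that from any position $(x,k)$ not in $\mathcal{P}^f_k$ there is a legal move landing in $\mathcal{P}^f_{k+1}$; together with Lemma \ref{fromptononp} this will identify $\mathcal{P}^f$ as exactly the set of $\mathcal{P}$-positions. First I would dispose of a trivial case: if $x = 0$ then $(x,k) \in \mathcal{P}^f_{k,0} \subseteq \mathcal{P}^f_k$, contradicting the hypothesis, so we may assume $x \geq 1$ and in particular $\textit{move}(x,k) \neq \emptyset$.

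The core of the argument is to locate $x$ relative to the intervals defining the $\mathcal{P}^f_{k,n}$. By Lemma \ref{comparenn1} the sets $\mathcal{P}^f_{k,0}, \mathcal{P}^f_{k,1}, \dots$ occupy disjoint, increasing intervals of the $x$-axis, with a "gap" between the top of $\mathcal{P}^f_{k,n}$ (namely $\sum_{t=1}^n(f(k+2t-1)+1)$) and the bottom of $\mathcal{P}^f_{k,n+1}$ (namely $\sum_{t=1}^{n+1}(f(k+2t-2)+1)$). Since $(x,k) \notin \mathcal{P}^f_k$, the value $x$ lies strictly inside one of these gaps: there is a unique $n \in \mathbb{Z}_{\ge 0}$ with
\begin{equation}
\sum_{t=1}^n (f(k+2t-1)+1) < x < \sum_{t=1}^{n+1} (f(k+2t-2)+1). \nonumber
\end{equation}
(One should check that $x$ cannot exceed all the intervals; but $f$ is increasing into $\mathbb{N}$, so the lower endpoints $\sum_{t=1}^{n+1}(f(k+2t-2)+1)$ grow without bound in $n$, hence such an $n$ exists.) The claim is then that the move taking the pile down to $u := \sum_{t=1}^n (f(k+1+2t-1)+1) = \sum_{t=1}^n(f(k+2t)+1)$ is legal and lands in $\mathcal{P}^f_{k+1,n}$. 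Membership in $\mathcal{P}^f_{k+1,n}$ is immediate from the definition once we observe $u$ is exactly the top endpoint of that interval (and $u \geq$ its bottom endpoint by Lemma \ref{comparenn1}-type monotonicity, or trivially since both endpoints bound the same nonempty interval).

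What remains — and this is the main obstacle — is verifying that the move is legal, i.e. that $1 \leq x - u \leq \min(x, f(k))$. The inequality $x - u \geq 1$, equivalently $u \leq x - 1 < x$, should follow from the left half of the gap inequality together with a shift identity relating $\sum_{t=1}^n(f(k+2t)+1)$ to $\sum_{t=1}^n(f(k+2t-1)+1)$: since $f$ is increasing, $\sum_{t=1}^n(f(k+2t-1)+1) \leq \sum_{t=1}^n(f(k+2t)+1) = u$ would go the wrong way, so the bookkeeping must instead compare $u$ against $x$ via $x > \sum_{t=1}^n(f(k+2t-1)+1)$ and a telescoping rewrite; I expect the correct target interval to be $\mathcal{P}^f_{k+1,n}$ with top endpoint $\sum_{t=1}^n(f(k+1+2t-1)+1)$, and one checks this is $< \sum_{t=1}^{n+1}(f(k+2t-2)+1) - $ (something), so it is $\leq x - 1$. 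The upper bound $x - u \leq f(k)$ is the delicate one: it amounts to $x - f(k) \leq u$, and here I would use the right half of the gap inequality $x < \sum_{t=1}^{n+1}(f(k+2t-2)+1) = \sum_{t=1}^{n}(f(k+2t-2)+1) + f(k+2n)+1$ together with the reindexing of $\mathcal{P}^f_{k+1,n}$'s bottom endpoint and the monotonicity of $f$ (so that $f(k) \leq f(k+2j)$ for the relevant $j$) to squeeze $x - f(k)$ below $u$. In short: the whole proof is the same style of telescoping-sum manipulation already displayed in the chain (\ref{uarea}) of Lemma \ref{fromptononp}, run in reverse, and the only real care needed is getting the index shifts ($k \mapsto k+1$, $n \mapsto n$ versus $n-1$) and the direction of the monotonicity inequalities exactly right so that the chosen $u$ simultaneously sits in the correct $\mathcal{P}^f_{k+1}$ interval and is reachable under the $f(k)$-cap.
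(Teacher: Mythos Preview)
Your overall plan---locate $x$ in the gap strictly between $\mathcal{P}^f_{k,n}$ and $\mathcal{P}^f_{k,n+1}$, then move into $\mathcal{P}^f_{k+1,n}$---is exactly the paper's strategy (up to a shift of index by one). The flaw is in how you execute the second step: you commit to a \emph{specific} landing point $u = \sum_{t=1}^n (f(k+2t)+1)$, the top endpoint of $\mathcal{P}^f_{k+1,n}$, and try to verify $1 \le x-u \le f(k)$. The lower bound $x-u \ge 1$ can genuinely fail. Take $f(m)=m$, $k=1$, $n=1$: the gap is $3 < x < 6$, the target interval $\mathcal{P}^f_{2,1}$ is $[3,4]$ with top endpoint $u=4$, and for $x=4$ you would be removing zero stones. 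You noticed the monotonicity goes the wrong way here, but there is no telescoping rewrite that rescues this particular $u$; the top endpoint is simply not always reachable. (Targeting the bottom endpoint instead fails the upper bound in the same example at $x=5$.)

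The paper avoids this by never naming a single target. Instead it shows that the set of reachable values $\{x-f(k),\dots,x-1\}$ overlaps the interval $\mathcal{P}^f_{k+1,n}$, via the two cross-inequalities
\[
x - f(k) \ \le\ \sum_{t=1}^{n}\bigl(f(k+1+2t-1)+1\bigr)
\qquad\text{and}\qquad
\sum_{t=1}^{n}\bigl(f(k+1+2t-2)+1\bigr) \ \le\ x-1.
\]
The first follows from the right edge of the gap after splitting off the $t=1$ term (which is exactly $f(k)+1$) and reindexing; the second is immediate from the left edge of the gap, since that left edge coincides with the bottom endpoint of $\mathcal{P}^f_{k+1,n}$. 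Two nonempty intervals with this crossing property must intersect, and any point in the intersection is a legal move into $\mathcal{P}^f_{k+1}$. Replacing your fixed $u$ with this overlap argument closes the gap completely.
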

\begin{proof}
Suppose that $(x,k) \notin \mathcal{P}_{k}^f$. Then,
there exists $n \in \mathbb{N}$ such that 
\begin{equation}
u < x < v \label{inbetween}
\end{equation}
for any
$(u,k) \in \mathcal{P}_{k,n-1}^f$ and $(v,k) \in \mathcal{P}_{k,n}^f$.
Then, we have
\begin{equation}
\sum_{t=1}^{n-1} (f(k+2t-1)+1) < x < \sum_{t=1}^{n} (f(k+2t-2)+1) \label{inbetween2}
\end{equation}
We prove that we can move to a position in $P^f_{k+1,n-1}$ from $(x,k)$.
By (\ref{inbetween2}), 
\begin{equation}
 x -f(k)  \leq \sum_{t=1}^{n-1} (f(k+1+2t-1)+1) \label{lessthan1}
\end{equation}
and 
\begin{equation}
\sum_{t=1}^{n-1} (f(k+1+2t-2)+1) \leq x-1. \label{morethan1}
\end{equation}
By (\ref{lessthan1}) and (\ref{morethan1}),
\begin{equation}
move(x,k) \cap \mathcal{P}_{k+1,n-1}^f = \{(x-f(k),k+1), \cdots, (x-1,k+1)\} \cap \mathcal{P}_{k+1,n-1}^f \ne \emptyset.    
\end{equation}
\end{proof}

\begin{equation}
\mathcal{P}^f= \cup \{\mathcal{P}_k^f:k \in \mathbb{N}\}.\nonumber
\end{equation}

\begin{theorem}\label{theoremformizugaki}
$\mathcal{P}^f$ is the set of $\mathcal{P}$-\textit{position}s
\end{theorem}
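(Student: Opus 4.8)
The plan is to apply the standard characterization of $\mathcal{P}$-positions for a finite impartial game: a set $S$ of positions equals the set of $\mathcal{P}$-positions precisely when (i) every terminal position lies in $S$, (ii) no move from a position in $S$ stays in $S$, and (iii) from every position outside $S$ some move lands in $S$. Here $S=\mathcal{P}^f$. Because each move removes at least one stone, the first coordinate strictly decreases along play, so the game is finite and the argument can be organized as an induction on the number $x$ of stones, uniformly in the turn index $k$.

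First I would record the bookkeeping observation that, since each $\mathcal{P}_j^f$ consists only of positions whose second coordinate equals $j$, a position $(y,j)$ lies in $\mathcal{P}^f$ if and only if it lies in $\mathcal{P}_j^f$; in particular, as $move(x,k)$ consists only of positions of the form $(x-t,k+1)$, we get $move(x,k)\cap\mathcal{P}^f = move(x,k)\cap\mathcal{P}_{k+1}^f$. The base case $x=0$ is immediate: $(0,k)$ has empty move set by Definition \ref{moveofmizugaki}, hence is terminal and a $\mathcal{P}$-position, and $(0,k)\in\mathcal{P}_{k,0}^f\subseteq\mathcal{P}^f$.

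For the inductive step, fix $(x,k)$ with $x\ge 1$. This position is not terminal, since $f(k)\ge 1$ allows removing one stone, and every position reachable from it has fewer than $x$ stones, so the induction hypothesis applies to all options. If $(x,k)\in\mathcal{P}^f$, then $(x,k)\in\mathcal{P}_k^f$, and Lemma \ref{fromptononp} combined with the bookkeeping observation gives $move(x,k)\cap\mathcal{P}^f=\emptyset$; by the induction hypothesis every option is an $\mathcal{N}$-position, so $(x,k)$ is a $\mathcal{P}$-position. If instead $(x,k)\notin\mathcal{P}^f$, then $(x,k)\notin\mathcal{P}_k^f$, and Lemma \ref{fromntopp} yields some $(u,k+1)\in move(x,k)\cap\mathcal{P}_{k+1}^f\subseteq\mathcal{P}^f$; by the induction hypothesis this option is a $\mathcal{P}$-position, so $(x,k)$ is an $\mathcal{N}$-position, hence not a $\mathcal{P}$-position. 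This closes the induction and identifies $\mathcal{P}^f$ with the set of $\mathcal{P}$-positions.

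Since Lemmas \ref{fromptononp} and \ref{fromntopp} have already carried out the combinatorial work, I do not expect a real obstacle here; the only points that need a little care are verifying that the families $\mathcal{P}_j^f$ are segregated by their turn coordinate (so that leaving $\mathcal{P}^f$ after a single move is the same as leaving $\mathcal{P}_{k+1}^f$), and making sure the induction is mounted on the stone count rather than on $k$, which increases along play.
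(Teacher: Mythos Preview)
Your argument is correct and follows essentially the same approach as the paper: both rely on Lemmas \ref{fromptononp} and \ref{fromntopp} to verify the standard closure conditions characterizing $\mathcal{P}$-positions. The paper's proof simply states that the result is direct from these two lemmas, leaving implicit the routine induction and bookkeeping that you have carefully spelled out.
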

\begin{proof}
This is direct from Lemma \ref{fromntopp} and Lemma \ref{fromptononp}.
\end{proof}

\section{Restricted Nim with Two Kinds of Stones}
In the traditional game of Nim, we use only one kind of stones. Here, we have two kinds of stones with different weights.

\begin{definition}\label{gameoftakahashi}
Suppose there is a pile of stones, and two players take turns removing stones from the pile. There are two types of stones. We call a stone Type 1 when its weight is $1$, and Type 2 when its weight is $2$.
When the total weight of stones is $m \in \mathbb{N}$,
a player is allowed to remove stones whose total weight is less than or equal to 
$\lfloor \frac{m}{2}\rfloor$.
\end{definition}

\begin{definition}\label{takahashi2}
We denote a position of the game of Definition \ref{gameoftakahashi} by 
$(x,y)$, where $x$ and $y$ are numbers of Type 2 stones and Type 2 stones respectively.
\end{definition}

\begin{definition}\label{moveoftakahashi}
For $x,y \in \mathbb{Z}_{\ge 0}$, the set of all the positions that can be reached from position $(x,y)$ is defined as 
$\textit{move}(x,y)$.
Then, we have 
\begin{equation}
\textit{move}(x,y)	= \{(x-t,y-u): t,u \in \mathbb{Z}_{\ge 0} \text{ and } 1 \leq 2t+u \leq \lfloor \frac{2x+y}{2} \rfloor \}.\nonumber   
\end{equation}
\end{definition}

\begin{definition}\label{defoftakahaship}
For $n \in \mathbb{N}$, let
\begin{equation}
\mathcal{P}_{n,1}= \{(2^n-1,0)\},\nonumber
\end{equation}
\begin{equation}
\mathcal{P}_{n,2} = \{ (2^{n}-i-1,2i-1): 1 \leq i \leq n-1 \text{ and }  i \in \mathbb{N}\},
\end{equation}
\begin{equation}
\mathcal{P}_{n,3} = \{ (2^n-n-i,2n +2i - 1):1 \leq i \leq 2^n-n \text{ and } i \in \mathbb{N} \}, \nonumber
\end{equation}
and 
\begin{equation}
\mathcal{P}= \bigcup_{n=1}^{\infty} (\mathcal{P}_{n,1} \cup \mathcal{P}_{n,2} \cup \mathcal{P}_{n,3}).\nonumber
\end{equation}
\end{definition}

\begin{lemma}\label{ptononposition}
Suppose that we start with a position 
\begin{equation}
(x,y) \in \mathcal{P}. \nonumber
\end{equation}
Then,
\begin{equation}
move(x,y) \cap \mathcal{P} = \emptyset. \nonumber
\end{equation}
\end{lemma}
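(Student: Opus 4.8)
The plan is to prove Lemma \ref{ptononposition} by a direct case analysis on which of the three families $\mathcal{P}_{n,1}$, $\mathcal{P}_{n,2}$, $\mathcal{P}_{n,3}$ the starting position $(x,y)$ lies in, and then checking that no legal move lands in $\mathcal{P}$. First I would record the basic quantities: if $(x,y)\in\mathcal{P}$ the total weight is $m=2x+y$, and a legal move removes total weight $w$ with $1\le w\le\lfloor m/2\rfloor$. I would compute $m$ for each family — for $\mathcal{P}_{n,1}$ we get $m=2^{n+1}-2$, for $\mathcal{P}_{n,2}$ with index $i$ we get $m=2^{n+1}-2i-2+2i-1=2^{n+1}-3$, and for $\mathcal{P}_{n,3}$ with index $i$ we get $m=2(2^n-n-i)+2n+2i-1=2^{n+1}-1$ — so each $\mathcal{P}$-position has total weight essentially $2^{n+1}-O(1)$, and after any legal move the new total weight $m'=m-w$ satisfies $m'\ge m-\lfloor m/2\rfloor=\lceil m/2\rceil$, which is about $2^n$. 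The key structural fact I would isolate is that the total weights of the $\mathcal{P}$-positions are sparse: within level $n$ they are clustered near $2^{n+1}$, so a single move from level $n$ cannot decrease the weight enough to reach level $n-1$ or below without overshooting the $\lfloor m/2\rfloor$ bound, while it also cannot stay at a weight realized by another $\mathcal{P}$-position of level $n$.

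The main computational work, and what I expect to be the central obstacle, is handling moves that stay "within the same level band" — that is, ruling out that from a position in $\mathcal{P}_{n,j}$ one can reach a position in $\mathcal{P}_{n,j'}$ (possibly with $j=j'$). Here the weight argument alone is too coarse, because the three families at level $n$ have weights $2^{n+1}-2$, $2^{n+1}-3$, and $2^{n+1}-1$, all within a window of size $3$, and any move removes weight at least $1$. So I would need to argue more carefully: a move from $\mathcal{P}_{n,1}=\{(2^n-1,0)\}$ removes weight at most $\lfloor(2^{n+1}-2)/2\rfloor=2^n-1$, so the resulting weight is at least $2^n-1$, which is far below $2^{n+1}-3$; hence one cannot reach any level-$n$ $\mathcal{P}$-position, and one must check the reached weight range $[2^n-1,\,2^{n+1}-3]$ contains no $\mathcal{P}$-position weight at all — this is where I would invoke that the only $\mathcal{P}$-weights are $2^{k+1}-1,2^{k+1}-2,2^{k+1}-3$, and verify $[2^n-1,2^{n+1}-3]$ misses all of them (the only candidate $2^n-1$ would require it to equal $2^k+1-2$ etc., which forces a contradiction, except one must be slightly careful at the boundary $2^n-1$ itself — note $2^n-1$ is not of the form $2^{k+1}-1,2^{k+1}-2,2^{k+1}-3$ for the relevant $k$). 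The analogous check for $\mathcal{P}_{n,2}$ and $\mathcal{P}_{n,3}$ is similar: the allowed weight removal is at most roughly $2^n-2$, pushing the resulting weight into an interval just above $2^n$, again disjoint from the set of $\mathcal{P}$-weights.

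For the remaining cases where the reached weight does land on a $\mathcal{P}$-weight, I would have to look at the actual $(x,y)$ coordinates rather than just the total weight, and use that removal is constrained by $t\le x$, $u\le y$ and $2t+u=w$. Concretely, after establishing which level $k$ the target could lie in, I would write the target as one of the explicit triples in Definition \ref{defoftakahaship} and check that the required decrease $(x-x',y-y')$ either has negative components or has $2(x-x')+(y-y')$ exceeding $\lfloor(2x+y)/2\rfloor$. I expect these residual subcases to be short once the weight bookkeeping pins down $k$, since the coordinates are given by clean closed forms in $n$ and $i$.

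In summary, the proof proceeds by: (1) computing the total weight of every $\mathcal{P}$-position and observing the weights form the sparse set $\{2^{m}-1,2^{m}-2,2^{m}-3:m\ge 2\}$; (2) bounding, for a position of level $n$, the reachable total weights to an interval of the form $[\approx 2^n,\,2^{n+1}-2]$ that contains none of the $\mathcal{P}$-weights except possibly near its endpoints; (3) disposing of the endpoint cases by direct inspection of the coordinate formulas. Step (2), the sparseness-versus-move-range comparison, is the heart of the argument and the step most likely to require care with floor functions and off-by-one boundary values.
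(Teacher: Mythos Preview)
Your outline---compute the total weight of each $\mathcal{P}$-position, bound the reachable weight interval after a legal move, and then clean up the boundary cases with coordinate inspection---is essentially the same strategy the paper uses. The paper also begins by computing that positions in $\mathcal{P}_{m,1},\mathcal{P}_{m,2},\mathcal{P}_{m,3}$ have total weight $2^{m+1}-2$, $2^{m+1}-3$, $2^{m+1}-1$ respectively, and then for each of the three cases bounds the residual weight below by $\lceil m/2\rceil$ and compares against these values.

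However, there is a concrete error in your weight bookkeeping that would derail Step~(2) as written. You assert that the interval $[2^n-1,\,2^{n+1}-3]$ of reachable weights from $\mathcal{P}_{n,1}$ contains no $\mathcal{P}$-weight, and in particular that ``$2^n-1$ is not of the form $2^{k+1}-1,2^{k+1}-2,2^{k+1}-3$.'' This is false: $2^n-1=2^{(n-1)+1}-1$ is exactly the weight carried by every position in $\mathcal{P}_{n-1,3}$, and likewise the upper endpoint $2^{n+1}-3$ is the weight of $\mathcal{P}_{n,2}$. So the reachable weight interval from each family hits $\mathcal{P}$-weights at \emph{both} ends, and the weight argument by itself does not dispose of any of the three cases. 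Your Step~(3) is therefore not a minor endpoint cleanup but carries essentially all of the content; in particular, from $\mathcal{P}_{n,2}$ one genuinely can reach total weight $2^n-1$ (the weight of $\mathcal{P}_{n-1,3}$), and from $\mathcal{P}_{n,3}$ one can reach weights $2^{n+1}-2$ and $2^{n+1}-3$ (the weights of $\mathcal{P}_{n,1}$ and $\mathcal{P}_{n,2}$).

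The paper handles exactly these residual cases by comparing second coordinates (or first coordinates): e.g.\ from $(2^n-i-1,2i-1)\in\mathcal{P}_{n,2}$ one has $y\le 2n-3$, while every position in $\mathcal{P}_{n-1,3}$ has $y\ge 2n-1$, so no move can increase $y$ enough. Your Step~(3) would reproduce precisely these checks, so the plan is recoverable---but you should not present the weight-sparseness comparison as the ``heart of the argument''; it only prunes levels $m\le n-2$, and the real work is the coordinate comparison at levels $n-1$ and $n$, exactly as in the paper.
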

\begin{proof}
Let $m \in \mathbb{N}$. For
positions $(x,y)=(2^m-1,0) \in \mathcal{P}_{m,1}$, $(x,y) =(2^m-i-1,2i-1) \in \mathcal{P}_{m,2}$, and $(x,y)=(2^m-m-i,2m+2i-1) \in \mathcal{P}_{m,3}$,
the total weights of the stones are
\begin{equation}
2(2^m-1)=2^{m+1}-2, \label{totalp1}  
\end{equation}

\begin{equation}
2(2^m-i-1)+2i-1 = 2^{m+1}-3,  \label{totalp2}    
\end{equation}
and 
\begin{equation}
2(2^m-m-i)+2m+2i-1=2^{m+1}-1   \label{totalp3}    
\end{equation}
respectively.\\
$(i)$ Suppose that we start with a position $(x,y)=(2^n-1,0) \in \mathcal{P}_{n,1}$. 
Then, by (\ref{totalp1}), we can remove stones whose total weight is equal to or less than
\begin{equation}
\lfloor \frac{2^{n+1}-2}{2} \rfloor =2^n-1.\label{howmuchtoremove}
\end{equation}
Since the second coordinate $y$ is $0$, it is clear that 
\begin{equation}
 move(x,y) \cap (\mathcal{P}_{n-1,2} \cup \mathcal{P}_{n-1,3}) = \emptyset. \nonumber
\end{equation}
If we are to move to $(2^{n-1}-1,0)$ from $(2^{n}-1,0)$, we need to remove 
stones whose total weight is $2 \times 2^{n-1} = 2^n$.
By (\ref{howmuchtoremove}), this is impossible.\\
Therefore, we have
\begin{equation}
move(x,y) \cap (\mathcal{P}_{n-1,2} \cup \mathcal{P}_{n-1,2} \cup \mathcal{P}_{n-1,3}) = \emptyset. \nonumber
\end{equation}
For any $m \in \mathbb{N}$ such that $m < n-1$, 
it is clear that we cannot move to  positions in $ (\mathcal{P}_{m,2} \cup \mathcal{P}_{m,2} \cup \mathcal{P}_{m,3})$, because 
the total weight of stones of these positions are too small. In the following $(ii)$ and $(iii)$, we omit the part of the proof for $m \in \mathbb{N}$ such that $m < n-1$.\\
$(ii)$ Suppose that we start with a position $(x,y) = (2^n-i-1,2i-1) \in \mathcal{P}_{n,2}$. 
Then, we can remove stones whose total weight is equal to or less than
\begin{equation}
\lfloor \frac{2^{n+1}-3}{2} \rfloor =2^n-2.\label{howmuchtoremove2}
\end{equation}
Then, we can move to a position whose total weight of stones is
equal to or greater than
\begin{equation}
2^{n+1}-3-(2^n-2)=2^n-1.\label{howmuchtoremove2b}
\end{equation}
Then, we have 
\begin{equation}
 move(x,y) \cap (\mathcal{P}_{n-1,1} \cup \mathcal{P}_{n-1,2}) = \emptyset. \nonumber
\end{equation}
Since the second coordinate $y$ is $2i-1$ with $i \leq n-1$,
$y \leq 2n-3$.
Since the second coordinate of a position $(2^{n-1}-(n-1)-i,2(n-1)+2i-1) \in \mathcal{P}_{n-1,3}$ is 
$2(n-1)+2i-1 = 2n+2i-3 > 2n-3$, we have 
\begin{equation}
 move(x,y) \cap \mathcal{P}_{n-1,3} = \emptyset. \nonumber
\end{equation}
$(iii)$ Suppose that we start with a position $(x,y) \in \mathcal{P}_{n,3}$. 
Then, we can remove stones whose total weight is equal to or less than
\begin{equation}
\lfloor \frac{2^{n+1}-1}{2} \rfloor =2^n-1.\label{howmuchtoremove3}
\end{equation}
Then, we can move to a position whose total weight of stones is
equal to or greater than
\begin{equation}
2^{n+1}-1-(2^n-1)=2^n.\label{howmuchtoremove2c}
\end{equation}
Therefore, by (\ref{totalp1}), (\ref{totalp2}), and (\ref{totalp3}),
\begin{equation}
 move(x,y) \cap (\mathcal{P}_{n-1,1} \cup \mathcal{P}_{n-1,2}\cup \mathcal{P}_{n-1,3}) = \emptyset. \nonumber
\end{equation}
\end{proof}
\pagebreak
\begin{figure}[H]
\centering
\includegraphics[height=10cm]{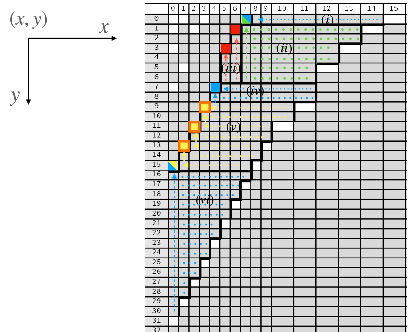}
\caption{How to move to P-positions}\label{positions0}
\end{figure}

\begin{example}
 Figure \ref{positions0} shows how to move to $\mathcal{P}$-positions.\\
$(i)$ If you start with a $\mathcal{N}$-position in Area $(i)$ of Figure  \ref{positions0}, you move rightward to reach $(7,0)$.\\
$(ii)$ If you start with a $\mathcal{N}$-position in Area $(ii)$, you move rightward and go upward to reach $(7,0)$.\\
$(iii)$ If you start with a $\mathcal{N}$-position in Area $(iii)$, you  go upward to reach $(5,3)$ or $(6,1)$.\\
$(iv)$ If you start with a $\mathcal{N}$-position in Area $(iv)$, you  you move rightward and go upward to reach $(4,7)$.\\ 
$(v)$ If you start with a $\mathcal{N}$-position in Area $(v)$, you  you move rightward and go upward to reach $(1,13),(2,11),(3,9)$.\\ 
$(vi)$ If you start with a $\mathcal{N}$-position in Area $(iv)$, you  you move rightward and go upward to reach $(0,15)$.\
\end{example}
Figure \ref{positions0} shows how to move to $\mathcal{P}$-positions, and for each area 
we use different colours.

\begin{figure}[H]
\centering
\includegraphics[height=10cm]{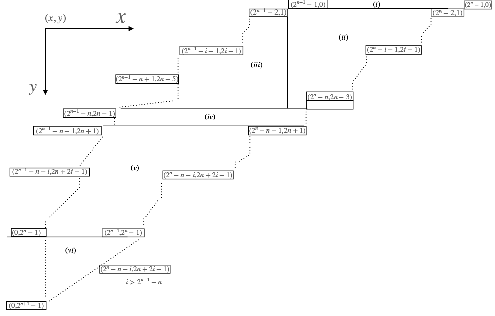}
\caption{How to move to P-positions}\label{positions1}
\end{figure}

\begin{lemma}\label{ntopposition}
Suppose that we start with a position 
\begin{equation}
(x,y) \notin \mathcal{P}. \nonumber
\end{equation}
Then,
\begin{equation}
move(x,y) \cap \mathcal{P} \ne \emptyset. \nonumber
\end{equation}
\end{lemma}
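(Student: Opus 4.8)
The plan is to show that from any position $(x,y)\notin\mathcal{P}$ we can reach some position in $\mathcal{P}$ in one move. First I would locate $(x,y)$ relative to the ``staircase'' of $\mathcal{P}$-positions by computing the total weight $w=2x+y$ and comparing it with the three characteristic total weights $2^{m+1}-2$, $2^{m+1}-3$, $2^{m+1}-1$ established in the proof of Lemma~\ref{ptononposition} (equations (\ref{totalp1})--(\ref{totalp3})). Since the terminal position $(0,0)$ has weight $0$ and is in $\mathcal{P}$ (as $(2^1-1,0)=(1,0)$... more precisely one checks the base cases directly), and since every nonterminal $(x,y)$ has $w\ge 1$, I would fix $n$ to be the least integer with $2^{n+1}-1 > w$ or an appropriate such bound, so that the ``target'' $\mathcal{P}$-positions of level $n$ or $n-1$ have total weight in the reachable window $[\,w-\lfloor w/2\rfloor,\; w-1\,]=[\lceil w/2\rceil, w-1]$.

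The core of the argument is then a case analysis on where $w$ falls. For a target $\mathcal{P}$-position $(x',y')$ with $x'\le x$, $y'\le y$, $(x-x')+(y-y')\ge 1$, and $2(x-x')+(y-y')\le\lfloor w/2\rfloor$ to be reachable, two things must hold: the weight drop $w-w'$ must lie in $[1,\lfloor w/2\rfloor]$, and the componentwise inequalities $x'\le x$, $y'\le y$ must be satisfiable. I would split according to the interval in which $w$ lies: (a) if $2^{n}\le w\le 2^{n+1}-1$ for the relevant $n$, then the level-$n$ positions $\mathcal{P}_{n,1},\mathcal{P}_{n,2},\mathcal{P}_{n,3}$ (weights $2^{n+1}-2,\,2^{n+1}-3,\,2^{n+1}-1$) are candidates since the drop $w-w'$ is small; (b) if $w$ is somewhat smaller, the level-$(n-1)$ positions (weights roughly $2^{n}-2,2^{n}-3,2^{n}-1$) become the targets, and one checks $w - w' \le \lfloor w/2\rfloor$. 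Within each interval I would further subdivide on the value of $y$: because $\mathcal{P}_{n,2}$ has the small odd second coordinates $1,3,\dots,2n-3$, $\mathcal{P}_{n,3}$ has the large odd second coordinates $2n-1,2n+1,\dots$, and $\mathcal{P}_{n,1}$ has second coordinate $0$, the parity of $y$ and its size determine which family can receive the move, and then $x'$ is forced, so one only needs to verify $x'\le x$. A clean way to organize this is to mimic the ``Area'' picture in Figure~\ref{positions0}: Areas $(i),(ii)$ feed into $(2^n-1,0)$ type targets, Area $(iii)$ into $\mathcal{P}_{n,2}$ targets, and Areas $(iv)$--$(vi)$ into $\mathcal{P}_{n,3}$ targets, moving ``rightward'' (decreasing $x$, i.e.\ removing Type~2 stones) and/or ``upward''.

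I would handle separately the boundary/edge cases: positions with $y=0$ but $x\ne 2^m-1$ (here one removes Type~2 stones to land on $(2^m-1,0)$ for the largest $m$ with $2^m-1\le x$, checking the weight drop $2(x-2^m+1)\le\lfloor 2x/2\rfloor = x$, which holds iff $x\le 2^{m+1}-2$, true by maximality of $m$); and small positions near $(0,0)$, done by direct inspection. For the generic case with $y\ge 1$, the key identity to keep exploiting is that consecutive $\mathcal{P}$-positions within a level differ by $(-1,+2)$ in $(x,y)$ (constant total weight), and that passing from the top of level $n{-}1$ to the bottom of level $n$ also changes $y$ by a controlled amount, so the set of achievable $y'$ values among $\mathcal{P}$-positions of total weight in $[\lceil w/2\rceil, w-1]$ is an interval of odd (or, for the $\mathcal{P}_{n,1}$ entries, the value $0$) numbers dense enough to contain some admissible $y'\le y$.

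\textbf{Main obstacle.} The hard part will be the bookkeeping at the junctions between levels — verifying that when $w$ lies just below a threshold $2^{n+1}-c$ the drop to the appropriate level-$(n-1)$ target still satisfies $w-w'\le\lfloor w/2\rfloor$ while simultaneously the componentwise conditions $x'\le x$ and $y'\le y$ are met — since the second coordinate jumps from $\le 2n-3$ in $\mathcal{P}_{n-1,2}$ to $\ge 2n-1$ in $\mathcal{P}_{n-1,3}$ and one must be sure no ``gap'' in reachable $\mathcal{P}$-positions is created there. I expect this to reduce, after the case split, to a handful of elementary inequalities in $n$, $i$, and $w$, each verifiable by the same kind of estimate used in Lemma~\ref{ptononposition}.
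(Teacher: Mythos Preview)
Your plan is essentially the paper's own proof: a six-region case analysis (the Areas $(i)$--$(vi)$ of Figures~\ref{positions0}--\ref{positions1}), with Areas $(i)$--$(ii)$ targeting a position of the form $(2^{m}-1,0)\in\mathcal{P}_{m,1}$, Area $(iii)$ targeting a $\mathcal{P}_{n,2}$ position by reducing $y$ only, and Areas $(iv)$--$(vi)$ targeting $\mathcal{P}_{n-1,3}$ positions; the paper executes exactly this and the ``bookkeeping at the junctions'' you anticipate is precisely the content of its inequalities (\ref{iquation11})--(\ref{totalppb3}). One small correction: your preliminary split~(a) is misstated, since level-$n$ targets have weights $2^{n+1}-3,2^{n+1}-2,2^{n+1}-1$ and are therefore only reachable (weight must strictly drop) when $w$ sits at the very top of the interval $[2^n,2^{n+1}-1]$---which is why the paper, and your own subsequent Area-based reorganization, routes most positions to level-$(n-1)$ targets instead.
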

\begin{proof}
Suppose that 
\begin{equation}
 (x,y) \notin \mathcal{P}.   
\end{equation}
$(i)$
Suppose that we start with the position  
$(x,y)=(x,0) \notin \mathcal{P}$, where $2^{n-1}-1 < x < 2^n-1$. 
$(x,0)$ is in Area $(i)$ of Figure \ref{positions1}.
We prove that we can go to the position $(2^{n-1}-1,0) \in \mathcal{P}_{n-1,1}$.
Then, there exists $t  \in \mathbb{N}$ such that
$x=2^n-t-1$ and $1 \leq t \leq 2^{n-1}-1$.
At the position $(x,0)$, the total weight of stones is 
$2x=2^{n+1}-2t-2$, and 
the total weight of stones that we can remove is
\begin{equation}
\lfloor \frac{2^{n+1}-2t-2}{2} \rfloor = 2^n-t-1.\nonumber
\end{equation}
Then, the number of stones of Type 2 that we can remove is
\begin{equation}
\lfloor \frac{2^{n}-t-1}{2} \rfloor  =   \lfloor 2^{n-1} - \frac{t+1}{2} \rfloor. \label{iquation11}
\end{equation}
To move to the position $(2^{n-1}-1,0) \in \mathcal{P}_{n-1,1}$, the number of stones of Type 2 that we need to remove is
\begin{equation}
x-(2^{n-1}-1) = 2^n-t-1-(2^{n-1}-1) = 2^{n-1}-t. \label{iquation22}
\end{equation}
Since $\frac{t+1}{2} \leq t$, by (\ref{iquation11}) and (\ref{iquation22}), we can move to
$(2^{n-1}-1,0) \in \mathcal{P}_{n-1,1}$. \\
$(ii)$
Suppose that we start with the position  
$(x,2i-1) \notin \mathcal{P}$ for $i=1,2,\cdots, n-1$ or $(x,2i-2) \notin \mathcal{P}$ for $i=2,\cdots, n$, where $2^{n-1}-1 \leq x < 2^n-1-i$. The positions $(x,2i-1)$ and $(x,2i-2)$ are in Area $(ii)$ of Figure \ref{positions1}.
Then, there exists $t  \in \mathbb{N}$ such that
$x=2^n-i-t-1$ and $1 \leq t \leq 2^{n-1}-i$.
We prove we can move to the position $(2^{n-1}-1,0) \in \mathcal{P}_{n-1,1}$.
At the position $(x,2i-1)$ or $(x,2i-2)$, the total weight of stones is 
$2x+2i-1 =2(2^n-i-t-1)+2i-1=2^{n+1}-2t-3$ or $2x+2i-2 =2(2^n-i-t-1)+2i-2=2^{n+1}-2t-4$, and 
you can remove stones whose total weight is 
\begin{equation}
\lfloor \frac{2^{n+1}-2t-3}{2} \rfloor  =   2^n-t-2 \label{iquation33}
\end{equation}
or 
\begin{equation}
\lfloor \frac{2^{n+1}-2t-4}{2} \rfloor  =   2^n-t-2 \label{iquation77}
\end{equation}
respectively.
To move to the position $(2^{n-1}-1,0) \in \mathcal{P}_{n-1,1}$ from $(x,2i-1)$ or $(x,2i-2)$, we need to remove
\begin{align}
2(x-(2^{n-1}-1))+2i-1  & = 2(2^n-i-t-1 -(2^{n-1}-1))+2i-1 \nonumber \\
& = 2^n-2t-1 \label{iquation001}
\end{align}	
or 
\begin{align}
2(x-(2^{n-1}-1))+2i-2  & = 2(2^n-i-t-1-2^{n-1}+1)+2i-2 \nonumber \\
& = 2^n-2t-2         \label{iquation88}
\end{align}	
respectively.
Since $t+2 \leq 2t+1 < 2t+2$, by (\ref{iquation33}), (\ref{iquation001}), (\ref{iquation77}) and (\ref{iquation88}),  
we can move to the position $(2^{n-1}-1,0) \in \mathcal{P}_{n-1,1}$.\\
$(iii)$ 
Suppose that we start with the position  
$(x,2i-1) \notin \mathcal{P}$ for $i = 2, \cdots, n-1$ or $(x,2i-2) \notin \mathcal{P}$ for $i = 2, \cdots, n$, where  $2^{n-1}-i-1 < x < 2^n-1$. The positions $(x,2i-1)$ and $(x,2i-2)$ are in Area $(iii)$ of Figure \ref{positions1}.
Then, there exists $t  \in \mathbb{N}$ such that
$x=2^n-t-1$ and $1 \leq t \leq 2^{n-1}+i-1$.
We prove that we can move to the position $(2^n-t-1,2t-1) \in \mathcal{P}_{n-1,1}$.
The total weight of the stones at the position $(x,2i-1)$ is
\begin{align}
2(2^n-t-1)+2i-1  & = 2^{n+1}-2t-2+2i-1 \nonumber \\
& \geq  2^{n+1}+2(i-t)-3.  \nonumber \\
& \geq 2(i-t)+1.   \label{iquation888}
\end{align}	
Similarly, 
The total weight of the stones at the position $(x,2i-2)$ is
\begin{equation}
2(i-t).\label{iquation999}
\end{equation}
If we go to the position $(x,2t-1) = (2^n-t-1,2t-1) \in \mathcal{P}_{n-1,1}$ from 
$(x,2i-1)$ or $(x,2i-2)$, we have to remove stones whose total weight is 
$2(i-t)$ or $2(i-t)-1$ respectively.
Then, by (\ref{iquation888}) and (\ref{iquation999}),
We can got to the position $(2^n-t-1,2t-1) \in \mathcal{P}_{n-1,1}$.\\
$(iv)$ Suppose that we start with the position $(x,2n-1) \notin \mathcal{P}$ or $(x,2n) \notin \mathcal{P}$
, where $2^{n-1}-n < x < 2^n-n$. The positions $(x,2n-1)$ and $(x,2n)$ are in Area $(iv)$ of Figure \ref{positions1}.
We prove that we can move to the position $(2^{n-1}-n,2n-1) \in \mathcal{P}_{n-1,3}$.
Then, there exists $t  \in \mathbb{N}$ such that
$x=2^n-n-t$ and $1 \leq t \leq 2^{n-1}-1$.
At the position $(x,2n-1)$ or $(x,2n)$, the total weight of stones is 
$2x+2n-1 =2(2^n-n-t)+2n-1=2^{n+1}-2t-1$ or $2x+2n =2(2^n-n-t)+2n=2^{n+1}-2t$ 
, and 
you can remove stones whose total weight is 
\begin{equation}
\lfloor \frac{2^{n+1}-2t-1}{2} \rfloor  =   2^n-t-1 \label{iquation113}
\end{equation}
or
\begin{equation}
\lfloor \frac{2^{n+1}-2t}{2} \rfloor  =   2^n-t \label{iquation115}
\end{equation}
respectively.
If we move to the position $(2^{n-1}-n,2n-1)$ from $(x,2n-1)$ or $(x,2n)$,
the total weight of the stones to remove is
\begin{align}
2(x-(2^{n-1}-n))  & = 2(2^n-n-t-2^{n-1}+n) \nonumber \\
& = 2^n-2t      \label{iquation114}
\end{align}	
or
\begin{align}
2(x-(2^{n-1}-n))+1  & = 2(2^n-n-t-2^{n-1}+n)+1 \nonumber \\
& = 2^n-2t+1         \label{iquation116}
\end{align}	
respectively.
By (\ref{iquation113}), (\ref{iquation114}), (\ref{iquation115}) and (\ref{iquation116})
we can move to the position $(2^{n-1}-n,2n-1) \in \mathcal{P}_{n-1,3}$.\\
$(v)$ We start with $(x,2n+2i-1)$ for $i=1,2, \cdots, 2^{n-1}-n$ or $(x,2n+2i)$ for $i=1,2, \cdots, 2^{n-1}-n-1$, 
where $x \in \mathbb{N}$ and $2^{n-1}-n-i < x < 2^n-n-i$. The positions $(x,2n+2i-1)$ and $(x,2n+2i-2)$ are in Area $(v)$ of Figure \ref{positions1}.
We prove that we can go to $(2^{n-1}-(n-1)-(i+1),2(n-1)+2(i+1)-1) 
= (2^{n-1}-n-i,2n+2i-1) \in \mathcal{P}_{n-1,3}$ from
$(x,2n+2i-1)$ and $(x,2n+2i)$ by removing stones.
Note that $(2^{n-1}-n-i,2n+2i-1)$ is the $i+1$th element of $\mathcal{P}_{n-1,3}$.
There exists $t$ such that $1 \leq t \leq 2^{n-1}-1$
and $x = 2^n-n-i-t$.
At the position $(x,2n+2i-1)$ or $(x,2n+2i)$, the total weights of the stones are 
\begin{align}
2x+2n+2i-1  & =2(2^n-n-i-t)+2n+2i-1 \nonumber \\
& =2^{n+1}-2t-1 \nonumber         
\end{align}	
or 
\begin{align}
2x+2n+2i  & = 2(2^n-n-i-t)+2n+2i \nonumber \\
& =   2^{n+1}-2t \nonumber 
\end{align}	
respectively.
Therefore, we can remove stones whose total weight is
\begin{equation}
\lfloor \frac{2^{n+1}-2t-1}{2} \rfloor = 2^n-t-1 \label{totalpp}
\end{equation}
or
\begin{equation}
\lfloor \frac{2^{n+1}-2t}{2} \rfloor = 2^n-t \label{totalpp2}   
\end{equation}
respectively.
If we move to $(2^{n-1}-n-i,2n+2i-1) \in \mathcal{P}_{n-1,3}$ from $(x,2n+2i-1)$ or $(x,2n+2i-2)$,
the total weight of stones to be removed is 
\begin{align}
2(x-(2^{n-1}-n-i))  & = 2(2^n-n-i-t-(2^{n-1}-n-i))    \nonumber \\
& =   2^n-2t     \label{totalpp3}
\end{align}	
or
\begin{align}
2(x-(2^{n-1}-n-i))+1  & = 2(2^n-n-i-t-(2^{n-1}-n-i))+1     \nonumber \\
& =   2^n-2t+1   \label{totalpp4}
\end{align}	
respectively.

by (\ref{totalpp}), (\ref{totalpp2}),  (\ref{totalpp2}), and  (\ref{totalpp4}),
 we can go to
$(2^{n-1}-n-i,2n+2i-1) \in \mathcal{P}_{n-1,3}$.\\
$(vi)$ We start with $(x,2n+2i-1)$ for $i=2^{n-1}-n+1, \cdots, 2^{n}-n-1$, or $(x,2n+2i-2)$ for $i=2^{n-1}-n+1, \cdots, 2^{n}-n$, 
where  $x \in \mathbb{N}$ and $0 < x < 2^n-n-i$. The positions $(x,2n+2i-1)$ and $(x,2n+2i-2)$ are in Area $(vi)$ of Figure \ref{positions1}.
We prove that we can go to $(0,2^n-1) \in \mathcal{P}_{n-1,3}$ from
$(x,2n+2i-1)$ and $(x,2n+2i-2)$ by removing stones.

At the position $(x,2n+2i-1)$ or $(x,2n+2i-2)$, the total weights of the stones are 
\begin{equation}
2x+2n+2i-1 \nonumber
\end{equation}
or
\begin{equation}
2x+2n+2i-2 \nonumber
\end{equation}
respectively.
Therefore, we can remove stones whose total weight is
\begin{equation}
x+n+i-1. \label{totalppb}
\end{equation}
If we move to $(0,2^n-1) \in \mathcal{P}_{n-1,3}$ from $(x,2n+2i-1)$ or $(x,2n+2i-2)$,
the total weight of stones to be removed is 
\begin{equation}
2x+2n+2i-1 -(2^m-1)=2x+2n+2i-2^n \label{totalppb2}
\end{equation}
or
\begin{equation}
2x+2n+2i-2  -(2^m-1)=2x+2n+2i-2^n-1 \label{totalppb3}
\end{equation}
respectively.
Since $2^n-n-i \geq x+1$,
by (\ref{totalppb}), (\ref{totalppb2}), and (\ref{totalppb3})
we can move to $(0,2^n-1) \in \mathcal{P}_{n-1,3}$.
\end{proof}

\begin{theorem}
$\mathcal{P}$ is the set of $\mathcal{P}$-\textit{position}s.
\end{theorem}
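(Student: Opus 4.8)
The plan is to show that $\mathcal{P}$ as defined in Definition \ref{defoftakahaship} is exactly the set of $\mathcal{P}$-positions for the game of Definition \ref{gameoftakahashi}, using the standard characterization of $\mathcal{P}$- and $\mathcal{N}$-positions for impartial games without draws. Recall that a set $S$ of positions is the set of $\mathcal{P}$-positions if and only if (1) every terminal position lies in $S$, (2) from every position in $S$ every move leads outside $S$, and (3) from every position not in $S$ there is a move into $S$. Here the unique terminal position is $(0,0)$, which belongs to $\mathcal{P}$ since $(0,0) = (2^1 - 1, 0) \in \mathcal{P}_{1,1}$.

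First I would invoke Lemma \ref{ptononposition}, which establishes property (2): starting from any $(x,y) \in \mathcal{P}$, we have $\textit{move}(x,y) \cap \mathcal{P} = \emptyset$. Next I would invoke Lemma \ref{ntopposition}, which establishes property (3): starting from any $(x,y) \notin \mathcal{P}$, we have $\textit{move}(x,y) \cap \mathcal{P} \neq \emptyset$. Together with the observation that $(0,0) \in \mathcal{P}$, these two lemmas give all three defining properties.

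To conclude, I would argue by induction on the total weight $2x + y$ (which strictly decreases with every move, since every move removes stones of positive total weight). The base case $2x + y = 0$ forces $(x,y) = (0,0) \in \mathcal{P}$, and this is a $\mathcal{P}$-position since the player to move loses immediately. For the inductive step, if $(x,y) \in \mathcal{P}$ then by Lemma \ref{ptononposition} every move goes to a position of strictly smaller total weight not in $\mathcal{P}$, which by the induction hypothesis is an $\mathcal{N}$-position, so $(x,y)$ is a $\mathcal{P}$-position; if $(x,y) \notin \mathcal{P}$ then by Lemma \ref{ntopposition} some move reaches a position in $\mathcal{P}$ of strictly smaller total weight, which by the induction hypothesis is a $\mathcal{P}$-position, so $(x,y)$ is an $\mathcal{N}$-position. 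This completes the induction and the proof.

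Since the two lemmas are already proved in the excerpt, there is essentially no remaining obstacle — the theorem follows by the routine backward-induction argument. If anything, the only point requiring minor care is confirming that the recursion is well-founded, i.e. that $2x+y$ is a genuine monotone measure that strictly decreases along every legal move; this is immediate from Definition \ref{moveoftakahashi}, where a legal move requires $1 \leq 2t + u$, so the total weight drops by at least one. Thus the proof is a direct consequence of Lemma \ref{ptononposition} and Lemma \ref{ntopposition}, exactly parallel to the proof of Theorem \ref{theoremformizugaki}.
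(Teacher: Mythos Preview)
Your approach is the same as the paper's --- the theorem is stated there as ``direct from Lemma~\ref{ptononposition} and Lemma~\ref{ntopposition}'' --- and your backward induction on the total weight $2x+y$ is exactly the standard way to package those two lemmas into a proof.

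There is, however, a concrete arithmetic slip in your verification of the base case: you write $(0,0) = (2^1-1,0) \in \mathcal{P}_{1,1}$, but $2^1-1 = 1$, so $(2^1-1,0) = (1,0)$, not $(0,0)$. In fact $(0,0)$ does \emph{not} belong to $\mathcal{P}$ as written in Definition~\ref{defoftakahaship}, and neither does $(0,1)$; also $(0,0)$ is not the unique terminal position, since both $(0,1)$ and $(1,0)$ admit no legal move (from $(1,0)$ one may remove total weight at most $1$, but the only stone present has weight $2$). So your base case as stated fails. The paper's one-line proof does not confront these boundary positions either, so this is a gap shared with the original; to make the induction go through cleanly one should either adjoin $(0,0)$ and $(0,1)$ to $\mathcal{P}$ or explicitly restrict attention to positions with $2x+y \geq 2$. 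Apart from this edge-case issue, your argument is sound.
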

\begin{proof}
This is direct from Lemma \ref{ptononposition} and Lemma \ref{ntopposition}   
\end{proof}

\section*{Acknowledgement} 
JSPS KAKENHI Grant Number 23H05173 supported this work.


\end{document}